\newtheorem{thm}{Theorem}[section]
\newtheorem{lemma}[thm]{Lemma}
\newtheorem{prop}[thm]{Proposition}
\author{Jeremy Rickard} 
\address{School of Mathematics, University of Bristol, Bristol BS8 1TW, UK}
\email{j.rickard@bristol.ac.uk}
\date\today
\title{Pathological abelian groups: a friendly example}
\keywords{Infinite abelian groups; pathological phenomena}
\begin{document}
\maketitle
\begin{abstract}
  We show that the group of bounded sequences of elements of
  $\mathbb{Z}[\sqrt 2]$ is an example of an abelian group with several
  well known, and not so well known, pathological properties.  It
  appears to be simpler than all previously known examples for some of
  these properties, and at least simpler to describe for others.
\end{abstract}

\section{History and introduction}

In the first edition of his famous book {\em Infinite Abelian Groups}
in 1954~\cite{kaplansky}, Kaplansky proposed three ``test problems''
for abelian groups. The motivation was that if these did not have
positive answers for some particular class of groups, then we could
pretty much give up on a satisfactory structure theorem for that
class. At the time, all three problems were open for general abelian
groups, although they were all answered during the next decade.

The first problem asked whether, if two abelian groups $G$ and $H$
are each isomorphic to a direct summand of the other, then we must
have $G\cong H$. In 1961 S\k{a}siada~\cite{sasiada} provided a
counterexample.

The second asked whether, if $G$ and $H$ are abelian groups with
$G\oplus G$ and $H\oplus H$ isomorphic, then we must have $G\cong
H$. In 1957 J\'onsson~\cite{jonsson} provided a counterexample.
 
Subsequently, all manner of pathological phenomena concerning direct
sum decompositions of abelian groups have been discovered, in many
cases involving quite intricate constructions. One of the most
well known is Corner's discovery~\cite{corner} of an abelian group $G$
such that $G\cong G\oplus G\oplus G$, but $G\not\cong G\oplus G$. This
example gives another solution to Kaplansky's first two problems,
since if we take $H=G\oplus G$, then $G$ and $H$ provide
counterexamples for both problems.

In a later edition of his book, Kaplansky wrote ``In this strange part
of the subject anything that can conceivably happen actually does
happen.''

However, Kaplansky's third test problem asked whether, if
$G\oplus\mathbb{Z}$ is isomorphic to $H\oplus\mathbb{Z}$, then we must
have $G\cong H$. This was answered independently by Cohn~\cite{cohn}
and Walker~\cite{walker} in 1956, and this time the answer is ``yes''.

Perhaps this indicates that, although the bad behaviour of general
abelian groups can be almost limitless, they can sometimes be
restrained by the company of well-behaved groups like $\mathbb{Z}$.

On the internet site MathOverflow, Martin
Brandenburg~\cite{brandenburg} asked in 2015 whether there is an
abelian group $A$ such that $A\cong A\oplus\mathbb{Z}^2$, but
$A\not\cong A\oplus\mathbb{Z}$. In fact the same question had been
answered in 1985 by Eklof and Shelah~\cite{eklof_shelah}, with a
characteristically ingenious and intricate construction. They credit
Sabbagh with asking the question.

The purpose of this paper is to give a simpler example of such a
group, which also provides the simplest example that I know of
Corner's phenomenon (and hence provides examples that answer
Kaplansky's first two test problems, maybe not simpler to verify than
existing examples, but easier to describe).

The group in question is the group of bounded sequences of elements of
$\mathbb{Z}[\sqrt 2]$.

\section{The main theorem}
\label{sec:main}

We will frequently be dealing with sequences of numbers, or of
functions, and we will use the compact notation $\underline{x}$ for a
sequence $(x_0,x_1,\dots)$. By a finite sequence we will mean
one with only finitely many nonzero terms.

For this section and the next, $A$ will be the additive group of
bounded (in the real norm) sequences $\underline{a}$ of elements of
$\mathbb{Z}[\sqrt 2]$. Our main theorem is

\begin{thm}\label{thm:main}
  As abelian groups, $A\cong A\oplus\mathbb{Z}^2$, but
  $A\not\cong A\oplus\mathbb{Z}$.
\end{thm}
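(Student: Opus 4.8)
The plan is to dispatch the positive statement $A\cong A\oplus\mathbb{Z}^2$ by a one-line splitting and to prove the negative statement $A\not\cong A\oplus\mathbb{Z}$ by producing an isomorphism-invariant property of abelian groups that $A$ possesses but $A\oplus\mathbb{Z}$ does not.

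For the positive part I would peel off the zeroth term. The assignment $(a_0,a_1,a_2,\dots)\mapsto\bigl(a_0,(a_1,a_2,\dots)\bigr)$ is an isomorphism $A\cong\mathbb{Z}[\sqrt 2]\oplus A$, since a sequence is bounded precisely when its (single) zeroth term is and its tail is, and reindexing identifies the tail with $A$. As an abelian group $\mathbb{Z}[\sqrt 2]$ is free of rank $2$, so $\mathbb{Z}[\sqrt 2]\cong\mathbb{Z}^2$ and $A\cong A\oplus\mathbb{Z}^2$ follows.

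The property I would use for the negative part is: \emph{$G$ admits an endomorphism $f\in\End(G)$ with $f^2=2\cdot\id_G$} --- equivalently, the underlying abelian group of $G$ carries a $\mathbb{Z}[\sqrt 2]$-module structure. This is manifestly an isomorphism invariant, and $A$ has it, since pointwise multiplication by $\sqrt 2$ carries a sequence bounded by $M$ to one bounded by $\sqrt 2\,M$ and so defines such an $f$. It is also exactly the feature that separates $\mathbb{Z}^2$ from $\mathbb{Z}$: on a free group of finite rank $r$ any $f$ with $f^2=2\cdot\id$ gives $\det(f)^2=2^{r}$, forcing $r$ even, so $\mathbb{Z}^2$ admits such an $f$ (send $e_1\mapsto e_2$ and $e_2\mapsto 2e_1$) while $\mathbb{Z}$ cannot. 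Thus it suffices to prove that $A\oplus\mathbb{Z}$ admits no endomorphism squaring to $2\cdot\id$.

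This last step is the crux. Writing $G=A\oplus\mathbb{Z}$ and decomposing a hypothetical $f$ against the $\mathbb{Z}$-summand as $f(\underline a,0)=(\phi(\underline a),\ell(\underline a))$ and $f(0,1)=(\underline b,m)$, the identity $f^2=2\cdot\id$ becomes $\phi^2=2\cdot\id_A-\ell(\,\cdot\,)\,\underline b$ together with $\ell\circ\phi=-m\,\ell$, $\phi(\underline b)=-m\,\underline b$ and $\ell(\underline b)=2-m^2$; the ``determinant'' of the right-hand side is formally $2^{r-2}m^2$, whose integrality as a square once more demands even rank and so contradicts the extra $\mathbb{Z}$. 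To turn this heuristic into a proof I would exploit the description of $A$ by the short exact sequence $0\to\ell^\infty(\mathbb{Z})\to A\to\mathbb{Z}^{\mathbb{N}}\to 0$, in which $A\to\mathbb{Z}^{\mathbb{N}}$ reads off the coefficients of $\sqrt 2$ and the kernel is the group of bounded integer sequences, and then invoke the rigidity of homomorphisms out of the Baer--Specker group $\mathbb{Z}^{\mathbb{N}}$ (Specker's theorem and the slenderness of $\mathbb{Z}$) to pin $\phi$, $\ell$ and $\underline b$ down to a finite amount of data from which a genuine determinant-parity contradiction can be extracted. It is precisely this rigidity, a consequence of the \emph{boundedness} built into $A$, that is doing the work: for the free module $\bigoplus_{\mathbb{N}}\mathbb{Z}[\sqrt 2]$ no such obstruction exists (there $\oplus\,\mathbb{Z}$ is absorbed), so the main obstacle is to carry out the parity argument using boundedness to control $\End(A)$ tightly enough in infinite rank.
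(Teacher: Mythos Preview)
Your positive part is correct and identical to the paper's. Your chosen invariant (``$G$ carries a $\mathbb{Z}[\sqrt 2]$-module structure'') is a clean reformulation of what the paper is implicitly doing, and the endgame you envisage---reduce to a finite-rank quotient carrying a $\mathbb{Z}[\sqrt 2]$-structure and read off a rank-parity contradiction---is exactly how the paper finishes.

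The gap is that you have not actually shown $A\oplus\mathbb{Z}$ fails the invariant. You yourself label the determinant computation a heuristic, and the route you sketch to repair it---the exact sequence $0\to\ell^\infty(\mathbb{Z})\to A\to\mathbb{Z}^{\mathbb{N}}\to 0$ together with Specker's theorem and slenderness---does not do the job. An arbitrary group endomorphism $\phi$ of $A$ has no reason to preserve $\ell^\infty(\mathbb{Z})$, so $\phi$ does not descend to $\mathbb{Z}^{\mathbb{N}}$ and you cannot invoke slenderness there; and on the kernel side you would need to control $\Hom(\ell^\infty(\mathbb{Z}),\mathbb{Z})$, which is itself a nontrivial problem of the same flavour as the one you are trying to solve. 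In short, the exact sequence does not give you the finiteness you need.

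What is actually required---and what the paper supplies---is a rigidity statement for $A$ directly. First, an analogue of Specker's theorem (proved by adapting the S\k{a}siada--\L o\'s argument to bounded $\mathbb{Z}[\sqrt 2]$-sequences): every group homomorphism $A\to\mathbb{Z}[\sqrt 2]$ depends on only finitely many coordinates, so any $\phi\in\End(A)$ is given by a row-finite matrix $(\beta_{mn})$ of group maps $\mathbb{Z}[\sqrt 2]\to\mathbb{Z}[\sqrt 2]$. Second, and this is the step where boundedness really bites: for all but finitely many columns $n$, every entry $\beta_{mn}$ is $\mathbb{Z}[\sqrt 2]$-linear. The reason is that a group map $\vartheta:\mathbb{Z}[\sqrt 2]\to\mathbb{Z}[\sqrt 2]$ which is \emph{not} $\mathbb{Z}[\sqrt 2]$-linear sends arbitrarily small elements to arbitrarily large ones (look at $\vartheta(a_n+b_n\sqrt 2)$ with $a_n+b_n\sqrt 2\to 0$, so $|b_n|\to\infty$); hence infinitely many non-linear columns would let you manufacture a bounded input with unbounded image. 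Once you have this, choose $n$ so large that on the tail $A[n]$ both your $\ell$ vanishes and your $\phi$ is $\mathbb{Z}[\sqrt 2]$-linear; the relevant finite-rank quotient then inherits a $\mathbb{Z}[\sqrt 2]$-module structure but has odd $\mathbb{Z}$-rank, and your parity argument goes through. Your outline correctly locates the obstacle (``control $\End(A)$ tightly enough in infinite rank'') but does not overcome it; the missing idea is the small-input/large-output lemma for non-$\mathbb{Z}[\sqrt 2]$-linear maps.
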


Clearly $A\oplus\mathbb{Z}[\sqrt 2]\cong A$ via the map
$(\underline{a},b)\mapsto(b,a_0,a_1,\dots)$. Since
$\mathbb{Z}[\sqrt 2]\cong\mathbb{Z}\oplus\mathbb{Z}$ as abelian
groups, $A\cong A\oplus\mathbb{Z}\oplus\mathbb{Z}$. We shall prove
that $A\not\cong A\oplus\mathbb{Z}$.

We first want to understand group homomorphisms
$\varphi:A\to\mathbb{Z}[\sqrt 2]$. We shall show in
Proposition~\ref{prop:functional} that there are none apart from the
obvious ones. This is analogous to a well known result of
Specker~\cite{specker} stating that the only group homomorphisms from
the group of all integer sequences (the Baer-Specker group) to
$\mathbb{Z}$ are the obvious ones of the form
$\underline{x}\mapsto\sum_ib_ix_i$ for a finite sequence
$\underline{b}$ of integers. We will adapt one proof of Specker's
result that combines ideas of S\k{a}siada and \L o\'s.

For $n\in\mathbb{N}$, let $A_n\cong\mathbb{Z}[\sqrt 2]$ be the
subgroup of $A$ consisting of sequences whose terms are all zero,
apart from possibly the $n$th term.

\begin{lemma}\label{lem:finite}
  Let $\varphi:A\to\mathbb{Z}[\sqrt 2]$ be a group homomorphism. Then
  $\varphi(A_n)=0$ for all but finitely many $n$.
\end{lemma}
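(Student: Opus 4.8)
The plan is to prove this by contradiction, following the Sąsiada–Łoś style argument that the excerpt explicitly promises to adapt.  Suppose $\varphi(A_n)\neq 0$ for infinitely many $n$; by passing to a subsequence I may assume $\varphi(A_n)\neq 0$ for \emph{all} $n$.  For each $n$, let $e_n\in A_n$ be the generator corresponding to $1\in\mathbb{Z}[\sqrt2]$ and $f_n\in A_n$ the generator corresponding to $\sqrt2$, and record the nonzero values $\varphi(e_n),\varphi(f_n)\in\mathbb{Z}[\sqrt2]$.  The idea is to build a single bounded sequence $\underline{a}=\sum_n c_n$ (where $c_n\in A_n$) whose "tails" are forced by divisibility conditions to be divisible by higher and higher powers of some fixed integer, so that $\varphi(\underline{a})$ would have to be divisible by arbitrarily large powers of that integer and hence be $0$, while simultaneously arranging that a \emph{different}, incompatible divisibility is forced, producing a contradiction.

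\textbf{Key steps, in order.}  First I would fix a prime $p$ (the natural choice is $p=2$, since $\mathbb{Z}[\sqrt2]/(2)$ or $\mathbb{Z}[\sqrt2]/(\sqrt2)$ is where the arithmetic of $\sqrt2$ is visible) and observe that $\varphi$ commutes with the $p$-adic-style divisibility structure: if an element $\underline{x}\in A$ is divisible by $p^k$ in $A$, then $\varphi(\underline{x})$ is divisible by $p^k$ in $\mathbb{Z}[\sqrt2]$.  The crucial point is that $A$, consisting of \emph{bounded} sequences, still contains every sequence of the form $\sum_n p^{k_n} c_n$ with $c_n\in A_n$ bounded and exponents $k_n\to\infty$ slowly enough to keep boundedness — and one can split such a convergent-looking formal sum as (finite head) $+$ ($p^k$ times a bounded tail), so that $\varphi$(tail) is $p^k$-divisible for every $k$.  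Second, I would choose the coefficients $c_n$ recursively so that the partial sums of $\varphi(c_n)$ are controlled: at stage $n$ I pick $c_n\in A_n$ with $\varphi(c_n)$ equal to a prescribed nonzero target modulo $p^{n}$ — this is possible precisely because $\varphi(A_n)\neq0$ gives me a nonzero value to scale.  Third, summing and using that the tail from stage $N$ onward is $p^N$-divisible, I conclude $\varphi(\underline{a})\equiv S_N\pmod{p^N}$ for the head sum $S_N$; by engineering the targets so that the $S_N$ do not $p$-adically converge (e.g.\ their $p$-adic valuations behave incompatibly), I force $\varphi(\underline{a})$ to be divisible by $p^N$ for all $N$, whence $\varphi(\underline{a})=0$, contradicting the construction that made it a specified nonzero element.

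\textbf{Where the boundedness and $\sqrt2$ enter.}  The subtlety special to this setting — and the main obstacle — is keeping the constructed sequence \emph{bounded in the real norm} while still having enough room to realize the divisibility targets.  In the Baer–Specker case one works with arbitrary integer sequences and has total freedom; here every term must satisfy a uniform bound, so the coefficients $p^{k_n}c_n$ cannot simply grow, and one must exploit that $\mathbb{Z}[\sqrt2]$ contains the unit $1+\sqrt2$ (or its inverse $\sqrt2-1$) of real norm $>1$ whose powers $(\sqrt2-1)^m\to0$: multiplying by high powers of the small unit lets me achieve large $p$-adic (equivalently, large $(\sqrt2)$-adic) divisibility while keeping the real size \emph{shrinking} rather than growing, so boundedness is automatic.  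The delicate part of the write-up will be bookkeeping these two competing valuations — the archimedean (real) size, which must stay bounded, versus the $\sqrt2$-adic valuation, which must tend to infinity along the tail — and verifying they can be driven in opposite directions simultaneously using the structure of units in $\mathbb{Z}[\sqrt2]$.  I expect everything else to be routine diagonalization once that tension is resolved.
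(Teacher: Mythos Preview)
Your instincts about the building blocks are exactly right, and they match the paper: the unit $\sqrt 2-1$ lets you manufacture elements of $\mathbb{Z}[\sqrt 2]$ that are simultaneously small in absolute value and divisible by an arbitrarily high power of $2$, so you can pick $c_k\in A_{n_k}$ with $|c_k|<1$, $\varphi(c_k)\neq 0$, and $c_k$ divisible by a higher power of $2$ than any of $\varphi(c_0),\dots,\varphi(c_{k-1})$. That is precisely the paper's first step.

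The gap is in how you extract a contradiction from a \emph{single} sequence $\underline{a}=(c_0,c_1,\dots)$. Once the tail beyond stage $N$ is $2^{k_N}$-divisible you do get $\varphi(\underline{a})\equiv S_N\pmod{2^{k_N}}$, but then $S_{N+1}-S_N=\varphi(c_N)$ is itself $2^{k_N}$-divisible, so the partial sums $S_N$ are \emph{automatically} $2$-adically Cauchy with limit $\varphi(\underline{a})\in\mathbb{Z}[\sqrt 2]$. You cannot ``engineer the $S_N$ not to converge'', and you have no independent way to certify that $\varphi(\underline{a})$ is nonzero while also forcing it to be divisible by every $2^N$. Your remark that ``$\varphi(A_n)\neq 0$ gives me a nonzero value to scale'' so as to hit a prescribed residue modulo $p^n$ is also too optimistic: $\varphi|_{A_n}$ is merely a $\mathbb{Z}$-linear map $\mathbb{Z}^2\to\mathbb{Z}^2$, possibly of rank one, and among the bounded, $2^n$-divisible elements of $A_n$ you do not have enough room to realise arbitrary targets.

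What the paper supplies, and what your outline is missing, is a cardinality step. Having built the $c_k$ (the paper calls them $x_k$), one looks at the \emph{uncountably many} bounded sequences whose $n_k$th entry is either $x_k$ or $0$. Since $\mathbb{Z}[\sqrt 2]$ is countable, $\varphi$ must agree on two of them; their difference is a nonzero element of $\ker\varphi$ whose first nonzero entry is $\pm x_k$ for some $k$, with every later entry divisible by a higher power of $2$ than $\varphi(x_k)$. Writing this kernel element as $\pm x_k+y$ gives $\varphi(x_k)=\mp\varphi(y)$, which is impossible because $\varphi(y)$ is divisible by that higher power of $2$. The pigeonhole over $2^{\aleph_0}$ sequences is what produces an element of the kernel with controlled shape; a single hand-built sequence does not.
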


\begin{proof} (cf. S\k{a}siada~\cite{sasiada:slender}.)
  If not, we can choose $n_0<n_1<\dots$ so that
  $\varphi(A_{n_k})\neq0$ for all $k$. The intersection of $A_{n_k}$
  with $\ker(\varphi)$ has rank at most one, so we can inductively
  choose $x_k\in A_{n_k}$ so that $\varphi(x_k)\neq0$,
  $\vert x_k\vert<1$, and $x_k$ is divisible by a larger power of $2$
  than any of $\varphi(x_0),\dots,\varphi(x_{k-1})$.

  Consider the sequences in $A$ whose $n_k$th term, for each $k$, is
  either $x_k$ or $0$, with all other terms zero. Since there are
  uncountably many such sequences, $\varphi$ must agree on two of
  them. Taking the difference of these two, we get a nonzero sequence
  in $\ker(\varphi)$ whose first nonzero term, in the $n_k$th place
  for some $k$, is $\pm x_k$ and with all other terms divisible by a
  higher power of $2$ than $\varphi(x_k)$. But this is a
  contradiction, since $\varphi(x_k)=\pm\varphi(y)$, where $y$ is the
  sequence obtained by replacing the first non-zero term by zero.
\end{proof}

Let $A'<A$ be the subgroup
$\left\{\underline{a}\in A\mid \sum_na_n\mbox{ converges}\right\}$. We
can adapt the previous proof, by replacing the condition
$\vert x_k\vert<1$ with the condition $\vert x_k\vert<2^{-k}$, to get
the following variant of the lemma.

\begin{lemma}\label{lem:variant}
  Let $\varphi:A'\to\mathbb{Z}[\sqrt 2]$ be a group homomorphism. Then
  $\varphi(A_n)=0$ for all but finitely many $n$.
\end{lemma}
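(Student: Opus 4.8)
The plan is to reuse the proof of Lemma~\ref{lem:finite} almost verbatim; the only genuine issue is that the test sequences must now live in the smaller group $A'$ rather than merely in $A$, and this is exactly what the sharpened size bound is engineered to guarantee. So I would argue by contradiction: suppose $\varphi(A_n)\ne0$ for infinitely many $n$, and choose $n_0<n_1<\cdots$ with $\varphi(A_{n_k})\ne0$ for all $k$. Since $A_{n_k}\cong\mathbb{Z}[\sqrt 2]$ has rank $2$ while $A_{n_k}\cap\ker(\varphi)$ has rank at most one, I would again choose $x_k\in A_{n_k}$ inductively with $\varphi(x_k)\ne0$ and with $x_k$ divisible by a higher power of $2$ than each of $\varphi(x_0),\dots,\varphi(x_{k-1})$, the single change being to demand $\vert x_k\vert<2^{-k}$ in place of $\vert x_k\vert<1$.

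The one thing to verify is that this smaller bound is still compatible with the other two requirements. This rests on the irrationality of $\sqrt2$: the image of $\mathbb{Z}[\sqrt 2]$ in $\mathbb{R}$ is dense, and hence so is the image of each subgroup $2^{M}\mathbb{Z}[\sqrt 2]$. Fix $M$ larger than every power of $2$ dividing the earlier $\varphi(x_j)$. The subgroup $2^{M}A_{n_k}\cap\ker(\varphi)$ has rank at most one, so it is cyclic and its image in $\mathbb{R}$ is discrete; let $\delta>0$ be the least absolute value of its nonzero elements (with no constraint if this group is trivial). Then any nonzero element of $2^{M}A_{n_k}$ of absolute value less than $\delta$ lies outside $\ker(\varphi)$, and by density there is such an element of absolute value less than $\min(2^{-k},\delta)$; this is the required $x_k$. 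Because $x_k\in 2^{M}A_{n_k}$ we have $x_k=2^{M}w$, whence $\varphi(x_k)=2^{M}\varphi(w)$ is itself divisible by $2^{M}$, so the divisibility bookkeeping of the original proof carries over unchanged.

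With the $x_k$ chosen, I would form, just as in Lemma~\ref{lem:finite}, the sequences whose $n_k$th term is $x_k$ or $0$ and whose other terms vanish. The new observation is that each such sequence now belongs to $A'$: its terms satisfy $\vert a_n\vert<2^{-k}$ in position $n_k$, so $\sum_n\vert a_n\vert\le\sum_k 2^{-k}<\infty$ and in particular $\sum_n a_n$ converges. There are uncountably many of these sequences but only countably many possible images in $\mathbb{Z}[\sqrt 2]$, so $\varphi$ agrees on two of them; their difference is a nonzero element of $\ker(\varphi)\cap A'$ whose first nonzero term, in some position $n_{k_0}$, is $\pm x_{k_0}$, while every later term $\pm x_k$ with $k>k_0$ is divisible by a higher power of $2$ than $\varphi(x_{k_0})$. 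Applying $\varphi$ gives $\varphi(x_{k_0})=\mp\varphi(y)$, where $y$ deletes the first nonzero term, and $\varphi(y)$ is divisible by a strictly higher power of $2$ than $\varphi(x_{k_0})\ne0$ — the required contradiction.

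I expect the only substantive step, and the sole point where this differs from Lemma~\ref{lem:finite}, to be the confirmation that the test sequences land in $A'$; this is precisely what replacing $\vert x_k\vert<1$ by the summable bound $\vert x_k\vert<2^{-k}$ secures. Everything else, including the selection of the $x_k$ and the final $2$-divisibility contradiction, is identical to the earlier argument, so the write-up should consist mainly of citing Lemma~\ref{lem:finite} and flagging the convergence of $\sum_n a_n$.
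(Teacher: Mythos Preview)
Your proposal is correct and follows exactly the approach the paper takes: the paper's own proof of Lemma~\ref{lem:variant} is simply the one-line remark that the proof of Lemma~\ref{lem:finite} goes through once the bound $\vert x_k\vert<1$ is replaced by $\vert x_k\vert<2^{-k}$, so that the test sequences lie in $A'$. Your extra paragraph justifying that such $x_k$ can actually be chosen (via density of $2^M\mathbb{Z}[\sqrt 2]$ and discreteness of the rank-one kernel) is a welcome elaboration of a point the paper leaves implicit, but the strategy is identical.
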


\begin{lemma}\label{lem:zero}
  Let $\varphi:A\to\mathbb{Z}[\sqrt 2]$ be a group homomorphism such that
  $\varphi(A_n)=0$ for all $n$. Then $\varphi=0$.
\end{lemma}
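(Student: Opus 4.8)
The plan is to show that $\varphi(\underline a)=0$ for every $\underline a\in A$ by proving that $\varphi(\underline a)$ is divisible by $2^k$ for every $k$; since $\bigcap_k 2^k\mathbb{Z}[\sqrt2]=0$ (an element divisible by all $2^k$ would have norm divisible by all $4^k$, hence be zero), this forces $\varphi(\underline a)=0$. First note that $\varphi$ vanishes on every finite sequence, since such a sequence is a finite sum of elements of the $A_n$. To extract divisibility by $2^k$, write each term as $a_n=2^kq_n+r_n$, where $r_n$ runs through a fixed, and hence bounded, set of coset representatives for $\mathbb{Z}[\sqrt2]/2^k\mathbb{Z}[\sqrt2]$. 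Boundedness of $\underline a$ then makes $\underline q$ bounded as well, so $\underline a=2^k\underline q+\underline r$ with $\underline q,\underline r\in A$ and $\varphi(\underline a)\equiv\varphi(\underline r)\pmod{2^k}$. The sequence $\underline r$ takes only finitely many values, say $v=a_v+b_v\sqrt2$ on disjoint sets $E_v$, so $\varphi(\underline r)=\sum_v\bigl(a_v\varphi(\mathbf 1_{E_v})+b_v\varphi(\sqrt2\,\mathbf 1_{E_v})\bigr)$, where $s\,\mathbf 1_E$ denotes the sequence equal to $s$ on $E$ and $0$ elsewhere. Thus the whole lemma reduces to the claim that $\varphi(\mathbf 1_E)=0$ and $\varphi(\sqrt2\,\mathbf 1_E)=0$ for every $E\subseteq\mathbb{N}$.

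For this core statement I would adapt the S\k{a}siada--\L o\'s method used for Lemma~\ref{lem:finite}. After relabelling we may take $E=\mathbb{N}$ and must show $\varphi(s\,\mathbf 1_{\mathbb{N}})=0$. The template is to produce uncountably many elements of $A$ whose pairwise differences are tightly controlled, so that---since $\mathbb{Z}[\sqrt2]$ is only countable---two of them must have the same image under $\varphi$, and then to derive a contradiction from the $2$-adic valuation of their difference. The resource that $\mathbb{Z}[\sqrt2]$ supplies, and that $\mathbb{Z}$ does not, is a supply of elements that are simultaneously divisible by a high power of $2$ and small in absolute value, namely $2^p\varepsilon^m$ with $\varepsilon=\sqrt2-1$ and $m$ large: because the powers of the unit $\varepsilon$ are of arbitrarily small absolute value, one can build, inside the real-bounded group $A$ (indeed inside $A'$), auxiliary sequences carrying an arbitrarily steep gradient of $2$-adic valuations while still staying bounded in the real norm.

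The step I expect to be the real obstacle is exactly this core case. The naive implementation---installing the valuation gradient one coordinate at a time---fails here, precisely because $\varphi$ annihilates each $A_n$ and so cannot see a perturbation supported on finitely many coordinates: the difference produced by the pigeonhole would then be divisible by every relevant power of $2$ for trivial reasons and yield no contradiction. The gradient must therefore be carried by perturbations with genuinely infinite support, and the delicate point is to arrange these so that $\varphi$ applied to the leading block has a controlled, small $2$-adic valuation while the remaining tail is divisible by a higher power. Making this bookkeeping work---balancing divisibility by $2$ against boundedness in the real norm, and exploiting the convergence built into $A'$ together with Lemma~\ref{lem:variant}---is where the substance of the argument lies; the reduction in the first paragraph is routine by comparison.
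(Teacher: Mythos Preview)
Your reduction in the first paragraph is correct but turns out to be a detour: the paper handles an arbitrary $\underline a$ with $\varphi(\underline a)\neq0$ directly, without ever passing to characteristic functions or invoking $2$-adic divisibility.

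The substantive gap is that you have not actually carried out the ``core case''. You correctly diagnose that single-coordinate perturbations are invisible to $\varphi$, and you correctly sense that Lemma~\ref{lem:variant} should be the tool; but you do not supply the construction that feeds $A'$ back into the problem. The paper's key idea is a one-line map that does exactly this: given $\underline a$ with $\varphi(\underline a)\neq0$, define $\theta:A'\to A$ by
\[
\theta(\underline b)=\bigl(b_0a_0,\ (b_0+b_1)a_1,\ (b_0+b_1+b_2)a_2,\ \dots\bigr),
\]
which lands in $A$ precisely because the partial sums of a sequence in $A'$ are bounded. Then $\theta(\underline{e_k})$ agrees with $\underline a$ except in the first $k$ coordinates, so $\varphi\theta(\underline{e_k})=\varphi(\underline a)\neq0$ for every $k$, contradicting Lemma~\ref{lem:variant} applied to $\varphi\theta$. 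No pigeonhole, no $2$-adic bookkeeping, no units $\varepsilon^m$ are needed at this stage---those devices belong to Lemma~\ref{lem:finite}, not here. Your instinct to route the argument through $A'$ and Lemma~\ref{lem:variant} is right; what is missing is precisely the partial-sum trick that makes the route short.
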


\begin{proof} (cf. \L o\'s~\cite{los}.)  Suppose not. Choose
  $\underline{a}\in A$ with $\varphi(\underline{a})\neq0$. If
  $\underline{b}\in A'$ then the partial sums $b_0+b_1+\dots+b_n$ are
  bounded, so we can define a homomorphism $\theta:A'\to A$ by
$$\theta(\underline{b})=\left(b_0a_0,(b_0+b_1)a_1,(b_0+b_1+b_2)a_2,\dots\right).$$

If $\underline{e_k}\in A'$ is the sequence which is zero except that
the $k$th term is $1$, then $\theta(\underline{e_k})$ differs from
$\underline{a}$ only in the first $k$ terms, so
$\varphi\theta\left(\underline{e_k}\right)=\varphi(\underline{a})\neq0$ for every
$k$, contradicting Lemma~\ref{lem:variant}.
\end{proof}

\begin{prop}\label{prop:functional}
  The only group homomorphisms $\varphi:A\to\mathbb{Z}[\sqrt 2]$ are
  the maps
$$\underline{a}\mapsto\sum_n\varphi_n(a_n),$$
for finite sequences $\underline{\varphi}$ of group
homomorphisms $\varphi_n:\mathbb{Z}[\sqrt 2]\to\mathbb{Z}[\sqrt 2]$.
\end{prop}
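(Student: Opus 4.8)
The plan is to deduce the statement directly from the three preceding lemmas, with Lemma~\ref{lem:finite} and Lemma~\ref{lem:zero} doing all the work. Given a homomorphism $\varphi:A\to\mathbb{Z}[\sqrt 2]$, I would identify each $A_n$ with $\mathbb{Z}[\sqrt 2]$ in the obvious way and let $\varphi_n:\mathbb{Z}[\sqrt 2]\to\mathbb{Z}[\sqrt 2]$ be the restriction of $\varphi$ to $A_n$ under this identification, i.e. $\varphi_n(b)=\varphi(\underline{a})$ where $\underline{a}$ is the sequence with $b$ in the $n$th place and zeros elsewhere. By Lemma~\ref{lem:finite} we have $\varphi_n=0$ for all but finitely many $n$, so $\underline{\varphi}$ is a finite sequence and the prescription $\underline{a}\mapsto\sum_n\varphi_n(a_n)$ makes sense: for each $\underline{a}$ the sum has only finitely many nonzero terms, and it defines a genuine homomorphism $\chi:A\to\mathbb{Z}[\sqrt 2]$.

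Next I would form the difference $\psi=\varphi-\chi$, which is again a homomorphism $A\to\mathbb{Z}[\sqrt 2]$, and show it is zero. By Lemma~\ref{lem:zero} it is enough to check that $\psi(A_m)=0$ for every $m$. But if $\underline{a}\in A_m$, then all of its terms except $a_m$ vanish, so $\chi(\underline{a})=\varphi_m(a_m)=\varphi(\underline{a})$ directly from the definition of $\varphi_m$; hence $\psi(\underline{a})=0$. Thus $\psi$ satisfies the hypothesis of Lemma~\ref{lem:zero}, giving $\psi=0$, that is $\varphi=\chi$, which is precisely the asserted form. Conversely, every map of the stated form is evidently a homomorphism, so the description is exact.

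I do not expect any real obstacle in this last step: the substantive content lies in the earlier lemmas, especially the uncountability-and-divisibility argument behind Lemma~\ref{lem:finite}. The only two points deserving a word of care are that $\chi$ is well defined, which is exactly what the finiteness conclusion of Lemma~\ref{lem:finite} provides, and that $\psi$ meets the hypothesis of Lemma~\ref{lem:zero} on each $A_m$, which is immediate from how the $\varphi_n$ were defined. Both are routine once the framework of the three lemmas is in place.
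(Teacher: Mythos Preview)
Your proposal is correct and follows exactly the route the paper takes: it invokes Lemma~\ref{lem:finite} to ensure the sequence $\underline{\varphi}$ is finite, then subtracts the resulting map and applies Lemma~\ref{lem:zero} to kill the difference. The paper's own proof is just a one-line pointer to these two lemmas, and you have simply written out the details.
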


\begin{proof}
  Clearly every such $\varphi$ is a group homomorphism. By
  Lemmas~\ref{lem:finite} and \ref{lem:zero} every group homomorphism
  is of this form.
\end{proof}

\begin{prop}\label{prop:groupendo}
  Every group endomorphism $\beta:A\to A$ is determined by the
  compositions $\beta_{mn}:A_n\to A\stackrel{\beta}{\to}A\to A_m$,
  where, for each $m$, all but finitely many $\beta_{mn}$ are zero.
\end{prop}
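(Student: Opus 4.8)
The plan is to derive this proposition directly from Proposition~\ref{prop:functional} by composing $\beta$ with the coordinate projections. For each fixed $m$, let $\pi_m:A\to A_m\cong\mathbb{Z}[\sqrt2]$ denote the projection onto the $m$th coordinate, and set $\psi_m=\pi_m\circ\beta$. This $\psi_m$ is a group homomorphism $A\to\mathbb{Z}[\sqrt2]$, so Proposition~\ref{prop:functional} applies: there is a finite sequence of homomorphisms $\varphi_n^{(m)}:\mathbb{Z}[\sqrt2]\to\mathbb{Z}[\sqrt2]$ with $\psi_m(\underline{a})=\sum_n\varphi_n^{(m)}(a_n)$. Restricting $\psi_m$ to $A_n$ identifies $\varphi_n^{(m)}$ with $\beta_{mn}$, and the finiteness of the sequence $\underline{\varphi}^{(m)}$ is precisely the assertion that, for each $m$, all but finitely many $\beta_{mn}$ vanish.

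It then remains to check that the family $(\beta_{mn})$ determines $\beta$. For any $\underline{a}\in A$, the $m$th coordinate of $\beta(\underline{a})$ equals $\psi_m(\underline{a})=\sum_n\beta_{mn}(a_n)$, a finite sum by the previous paragraph. Since an element of $A$ is determined by all of its coordinates, knowing every $\beta_{mn}$ determines $\beta(\underline{a})$ for every $\underline{a}$, and hence determines $\beta$ itself.

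The argument is essentially a repackaging of Proposition~\ref{prop:functional}, so I do not anticipate a genuine obstacle; the only point needing care is the bookkeeping of the identifications $A_n\cong\mathbb{Z}[\sqrt2]\cong A_m$, ensuring that the index-$n$ component of the functional $\psi_m$ really coincides with the composite $\beta_{mn}$. Alternatively, the uniqueness half could be phrased through Lemma~\ref{lem:zero}: if all $\beta_{mn}$ vanish, then each $\psi_m$ kills every $A_n$, whence $\psi_m=0$ for all $m$ and therefore $\beta=0$.
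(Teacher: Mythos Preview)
Your proof is correct and is exactly the argument the paper gives, just written out in more detail: the paper simply says that since $A$ sits inside a direct product of copies of $\mathbb{Z}[\sqrt 2]$, the result follows immediately from Proposition~\ref{prop:functional}. Your explicit unpacking via the projections $\pi_m$ and the identification of $\varphi_n^{(m)}$ with $\beta_{mn}$ is precisely what that one-line proof is asserting.
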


\begin{proof}
  Since $A$ is a subgroup of a direct product of copies of
  $\mathbb{Z}[\sqrt 2]$ in an obvious way, this follows immediately
  from Proposition~\ref{prop:functional}.
\end{proof}

In other words, this means that if we think of sequences as infinite
column vectors then we can represent $\beta$ as an infinite matrix of
homomorphisms $\beta_{mn}:\mathbb{Z}[\sqrt 2]\to\mathbb{Z}[\sqrt 2]$,
with finitely many nonzero entries in each row. In fact, every such
matrix will describe a homomorphism from $A$ to the group of all
sequences of elements of $\mathbb{Z}[\sqrt 2]$, but extra conditions
are needed to ensure that the image of this homomorphism consists of
bounded sequences.

\begin{lemma}\label{lem:epsilon}
  Let $\vartheta:\mathbb{Z}[\sqrt 2]\to\mathbb{Z}[\sqrt 2]$ be a group
  homomorphism that is not a $\mathbb{Z}[\sqrt 2]$-module
  homomorphism. Then for any $\epsilon>0$ and $N>0$ there is some
  $x\in\mathbb{Z}[\sqrt 2]$ with $\vert x\vert<\epsilon$ and
  $\vert\vartheta(x)\vert>N$.
\end{lemma}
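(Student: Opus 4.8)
The plan is to exploit the fact that $\mathbb{Z}[\sqrt 2]$ contains units of infinite order, together with the observation that an additive map which fails to be $\mathbb{Z}[\sqrt 2]$-linear must ``see'' the Galois conjugation $\sigma\colon a+b\sqrt 2\mapsto a-b\sqrt 2$. The small elements I will feed into $\vartheta$ are the powers of $\bar u=1-\sqrt 2$, the conjugate of the fundamental unit $u=1+\sqrt 2$: these have real value tending to $0$ while their conjugates blow up. The numerical facts I need are elementary: $u$ and $\bar u$ are units (their product is $-1$), with $|u|=1+\sqrt 2>1$ and $|\bar u|=\sqrt 2-1<1$, and $\overline{\bar u}=u$.

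Next I would establish the key decomposition. Since $\mathbb{Z}[\sqrt 2]$ is a full lattice in $\mathbb{R}$, the map $\vartheta$ extends uniquely to a $\mathbb{Q}$-linear endomorphism of $\mathbb{Q}(\sqrt 2)$. As $\mathrm{id}$ and $\sigma$ are distinct automorphisms of $\mathbb{Q}(\sqrt 2)$, they are linearly independent over $\mathbb{Q}(\sqrt 2)$ by Dedekind's independence of characters, and a dimension count shows that every $\mathbb{Q}$-linear endomorphism is uniquely $\alpha\,\mathrm{id}+\beta\,\sigma$ (acting by multiplication on the output) for some $\alpha,\beta\in\mathbb{Q}(\sqrt 2)$; that is,
$$\vartheta(x)=\alpha x+\beta\,\bar x\qquad(x\in\mathbb{Z}[\sqrt 2]).$$
Solving $\vartheta(1)=\alpha+\beta$ and $\vartheta(\sqrt 2)=(\alpha-\beta)\sqrt 2$ shows that $\beta=0$ holds precisely when $\vartheta(\sqrt 2)=\sqrt 2\,\vartheta(1)$, i.e. precisely when $\vartheta$ is $\mathbb{Z}[\sqrt 2]$-linear. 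So the hypothesis gives $\beta\neq0$. (To avoid the independence-of-characters argument one can instead set $d=\vartheta(\sqrt 2)-\sqrt 2\,\vartheta(1)\neq0$ and carry $d$ through the computation below directly.)

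Finally I would evaluate on $x_n=\bar u^{\,n}$. Then $|x_n|=|\bar u|^n\to0$, so $|x_n|<\epsilon$ for large $n$, while, using $\overline{\bar u^{\,n}}=u^{\,n}$,
$$\vartheta(x_n)=\alpha\,\bar u^{\,n}+\beta\,u^{\,n}.$$
Since $\alpha\bar u^{\,n}\to0$ and $|\beta u^{\,n}|=|\beta|\,|u|^n\to\infty$ (because $\beta\neq0$ and $|u|>1$), the triangle inequality gives $|\vartheta(x_n)|\ge|\beta|\,|u|^n-|\alpha|\,|\bar u|^n\to\infty$, so $|\vartheta(x_n)|>N$ for large $n$. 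Picking $n$ large enough for both bounds then produces the required $x=x_n$.

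The main obstacle is purely conceptual rather than computational: isolating the right picture, namely that failure of $\mathbb{Z}[\sqrt 2]$-linearity is exactly a nonzero conjugation component $\beta$, and that the conjugate-blowing-up units $\bar u^{\,n}$ are the correct vectors on which to test it. Once $\beta\neq0$ is in hand, the growth estimate is an immediate consequence of $|\bar u|<1<|u|$.
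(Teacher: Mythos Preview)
Your argument is correct. The underlying idea is the same as the paper's: elements $x=a+b\sqrt 2$ with $|x|$ small force $|b|$ large, and the ``defect'' $d=\vartheta(\sqrt 2)-\sqrt 2\,\vartheta(1)\neq 0$ then makes $|\vartheta(x)|$ large. You package this more structurally, via the decomposition $\vartheta=\alpha\,\mathrm{id}+\beta\,\sigma$ and the explicit test vectors $x_n=(1-\sqrt 2)^n$, whereas the paper simply writes
\[
\vartheta(a_n+b_n\sqrt 2)=(a_n+b_n\sqrt 2)\,\vartheta(1)+b_n\,d
\]
for an arbitrary sequence $a_n+b_n\sqrt 2\to 0$ with $b_n\neq 0$, observes that necessarily $|b_n|\to\infty$, and concludes immediately. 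Your route gives more conceptual insight (the Galois picture and the role of units), at the cost of invoking a bit more machinery; the paper's is shorter and entirely elementary. Your parenthetical alternative, carrying $d$ directly, is exactly the paper's argument.
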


\begin{proof}
  $\vartheta(\sqrt 2)-\sqrt 2\vartheta(1)\neq0$, since $\vartheta$ is
  not a $\mathbb{Z}[\sqrt 2]$-module homomorphism. Choose sequences
  $\underline{a}$ and $\underline{b}$ of nonzero integers such that
  $a_n+b_n\sqrt 2\to 0$, so that necessarily $|b_n|\to\infty$. Then
$$\left|\vartheta(a_n+b_n\sqrt 2)\right|=
\left|(a_n+b_n\sqrt 2)\vartheta(1)+
b_n\left(\vartheta(\sqrt 2)-\sqrt 2\vartheta(1)\right)\right|
\to\infty.$$
\end{proof}

\begin{lemma}\label{lem:modulehom}
  Let $\beta:A\to A$ be a group endomorphism, and define
  $\beta_{mn}:A_n\to A_m$ as in Proposition~\ref{prop:groupendo}. For
  all but finitely many $n$, $\beta_{mn}$ is a
  $\mathbb{Z}[\sqrt 2]$-module homomorphism for all $m$.
\end{lemma}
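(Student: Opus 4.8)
The plan is to argue by contradiction. Call a column $n$ \emph{bad} if $\beta_{mn}$ fails to be a $\mathbb{Z}[\sqrt 2]$-module homomorphism for at least one $m$, and suppose there are infinitely many bad columns $n_0<n_1<\cdots$. For each $k$ I would choose a row index $m_k$ with $\beta_{m_k n_k}$ not a module homomorphism. The key structural fact, taken from Proposition~\ref{prop:groupendo}, is that each row is \emph{finitely supported}: for fixed $m$ only finitely many $\beta_{mn}$ are nonzero. Writing $C_k$ for the finite set of columns on which row $m_k$ is supported, the strategy is to feed $\beta$ a bounded sequence supported on the $n_k$ whose image is unbounded, contradicting $\beta(A)\subseteq A$.

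Before constructing that sequence I would thin out the data to remove \emph{future interference}. Using that each $C_k$ is finite, I would inductively pass to a subsequence (and reindex) so that $n_j\notin C_k$ whenever $j>k$: having chosen indices up to the $t$th stage, only finitely many columns lie in the finite union of the corresponding $C_k$, so the next index can be taken beyond all of them. After reindexing I may therefore assume $\beta_{m_k n_j}=0$ for all $j>k$, so that in row $m_k$ the only possible contributions come from columns $n_j$ with $j\le k$. (This incidentally forces the $m_k$ to be distinct, since each $\beta_{m_k n_k}$ is nonzero.)

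Now I would build $\underline a$ with $a_{n_k}=x_k$ and all other terms zero, choosing the $x_k$ one at a time. Having fixed $x_0,\dots,x_{k-1}$, the quantity $c_k=\sum_{j<k}\beta_{m_k n_j}(x_j)$ is a fixed element of $\mathbb{Z}[\sqrt 2]$, and by the thinning the $m_k$th term of $\beta(\underline a)$ is exactly $c_k+\beta_{m_k n_k}(x_k)$. Since $\beta_{m_k n_k}$ is not a module homomorphism, Lemma~\ref{lem:epsilon} lets me pick $x_k$ with $\vert x_k\vert<2^{-k}$ and $\vert\beta_{m_k n_k}(x_k)\vert>\vert c_k\vert+k$, whence $\vert\beta(\underline a)_{m_k}\vert>k$. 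Then $\underline a\in A$ because $\vert a_n\vert<1$ for all $n$, but $\beta(\underline a)$ is unbounded, the desired contradiction.

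The main obstacle is precisely the interference between columns, and it is worth recording why the thinning is aimed only at \emph{future} columns. Past contributions are harmless: once $x_0,\dots,x_{k-1}$ are chosen, $c_k$ is a fixed finite number that a single large value of $\beta_{m_k n_k}(x_k)$ can dominate. Future contributions cannot be controlled this way, because a non-module homomorphism sends the small inputs produced by Lemma~\ref{lem:epsilon} to \emph{large} values; one therefore cannot simultaneously keep the image of $x_j$ small in an earlier bad row while making it large in its own row. Removing future interference combinatorially, using finiteness of the rows, rather than analytically, is what makes the construction go through.
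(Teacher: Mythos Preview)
Your argument is correct and follows essentially the same route as the paper: assume infinitely many bad columns, use row-finiteness to select a subsequence with no ``future interference'' ($\beta_{m_k n_j}=0$ for $j>k$), then invoke Lemma~\ref{lem:epsilon} recursively to build a bounded input whose image is unbounded. The only cosmetic difference is that the paper interleaves the choice of $n_k$ and $m_k$ in a single recursion (which makes the $m_k$ increasing rather than merely distinct), whereas you first fix witnesses and then thin; the content is the same.
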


\begin{proof}
  Suppose not. Since, for each $m$, $\beta_{mn}=0$ for all but
  finitely many $n$, we can recursively choose $m_0<m_1<\dots$ and
  $n_0<n_1<\dots$ so that $\beta_{m_kn_k}$ is not a
  $\mathbb{Z}[\sqrt 2]$-module homomorphism for any $k$, and such that
  $\beta_{m_kn_l}=0$ for $k<l$. For if we have chosen
  $m_0,\dots,m_{k-1}$ and $n_0,\dots,n_{k-1}$ consistent with these
  properties, then we can choose $n_k$ large enough that
  $\beta_{mn_k}=0$ for all $m\leq m_{k-1}$ and such that there is some
  $m_k$ for which $\beta_{m_kn_k}$ is not a
  $\mathbb{Z}[\sqrt 2]$-module homomorphism.

  By Lemma~\ref{lem:epsilon} we can recursively choose
  $x_{n_k}\in\mathbb{Z}[\sqrt 2]$ such that, identifying each $A_n$
  with $\mathbb{Z}[\sqrt 2]$ in the obvious way, $|x_{n_k}|$ is
  bounded, but
$$\left|\beta_{m_kn_k}(x_{n_k})+\sum_{l<k}\beta_{m_kn_l}(x_{n_l})\right|\to\infty.$$

But then, if we set $x_n=0$ for $n\not\in\{n_0,n_1,\dots\}$, the
sequence $\underline{x}$ is bounded, and so in $A$, but
$\beta(\underline{x})$ is unbounded, contradicting the fact that
$\beta(\underline{x})\in A$.
\end{proof}

\begin{proof}[Proof of Theorem~\ref{thm:main}]
  As pointed out earlier, the fact that $A\cong A\oplus\mathbb{Z}^2$
  is clear.

 Suppose $A\cong A\oplus\mathbb{Z}$. Then there is a monomorphism
 $\beta:A\to A$ such that $A/\beta(A)\cong\mathbb{Z}$. By
 Proposition~\ref{prop:groupendo} and Lemma~\ref{prop:groupendo},
 $\beta$ is described by an infinite matrix $(\beta_{mn})$ with only
 finitely many columns containing any entries that are not
 $\mathbb{Z}[\sqrt 2]$-module homomorphism. So for sufficiently large
 $n$, if $A[n]\leq A$ is the subgroup of sequences whose first $n$
 terms are zero, then the restriction of $\beta$ to $A[n]$ is a
 $\mathbb{Z}[\sqrt 2]$-module homomorphism, and so $A/\beta(A[n])$ is
 a $\mathbb{Z}[\sqrt 2]$-module, and so must have even rank, since
 $\left(A/\beta(A[n])\right)\otimes_\mathbb{Z}\mathbb{Q}$ is a vector
 space over $\mathbb{Q}(\sqrt 2)$. However, as an abelian group
 $A/\beta(A[n])\cong\mathbb{Z}^{2n+1}$.
\end{proof}

\section{Corollaries}
\label{sec:corollaries}

For $A$ the group considered in Section~\ref{sec:main}, let
$B=A\oplus\mathbb{Z}$, so $A\not\cong B$ by
Theorem~\ref{thm:main}. Using the fact that $A\cong A\oplus A$, it is
easy to see that $B$ provides an example of Corner's phenomenon.

\begin{thm}\label{thm:corner}
  $B\cong B\oplus B\oplus B$, but $B\not\cong B\oplus B$.
\end{thm}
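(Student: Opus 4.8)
The plan is to reduce both assertions to Theorem~\ref{thm:main} together with the single structural observation that $A\cong A\oplus A$, after which everything becomes routine arithmetic of direct sums. All the genuine content already lives in Theorem~\ref{thm:main}; the present statement is essentially bookkeeping that repackages its ``additive'' pathology as Corner's ``multiplicative'' one.

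First I would record that $A\cong A\oplus A$. Splitting a bounded sequence into its even-indexed and odd-indexed subsequences gives a group isomorphism $A\to A\oplus A$: each of the two subsequences is again bounded, and the operation is manifestly reversible. Hence $A\cong A^{\oplus k}$ for every $k\geq1$. I expect this to be the only step needing a moment's care, precisely because it is what converts Theorem~\ref{thm:main} into the statement $B\cong B\oplus B\oplus B\not\cong B\oplus B$.

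Next I would compute $B\oplus B$. Since $B=A\oplus\mathbb{Z}$,
$$B\oplus B\cong(A\oplus A)\oplus\mathbb{Z}^2\cong A\oplus\mathbb{Z}^2\cong A,$$
where the final isomorphism is the one furnished by Theorem~\ref{thm:main}. Thus $B\oplus B\cong A$, whereas $B=A\oplus\mathbb{Z}$. Consequently an isomorphism $B\cong B\oplus B$ would yield $A\oplus\mathbb{Z}\cong A$, contradicting Theorem~\ref{thm:main}; this establishes $B\not\cong B\oplus B$.

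Finally, for $B\cong B\oplus B\oplus B$ I would compute
$$B\oplus B\oplus B\cong(A\oplus A\oplus A)\oplus\mathbb{Z}^3\cong A\oplus\mathbb{Z}^3\cong(A\oplus\mathbb{Z}^2)\oplus\mathbb{Z}\cong A\oplus\mathbb{Z}=B,$$
once more invoking $A\cong A\oplus A$ and $A\cong A\oplus\mathbb{Z}^2$. The main obstacle is that there is, in effect, no obstacle remaining: the substance was discharged in proving Theorem~\ref{thm:main}, and the two displayed chains of isomorphisms complete the argument.
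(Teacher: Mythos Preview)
Your proof is correct and follows essentially the same approach as the paper: both establish $A\cong A\oplus A$ via the even/odd (equivalently, interleaving) bijection on bounded sequences, and then reduce the two assertions about $B$ to Theorem~\ref{thm:main} by routine direct-sum bookkeeping. The only cosmetic difference is that the paper reuses the identification $B\oplus B\cong A$ to compute $B\oplus B\oplus B\cong B\oplus A\cong A\oplus\mathbb{Z}=B$, whereas you expand $B^{\oplus3}$ directly as $A^{\oplus3}\oplus\mathbb{Z}^3$; these are the same argument.
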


\begin{proof}
  The map
  $(\underline{a},\underline{b})\mapsto (a_0,b_0,a_1,b_1,\dots)$ is an
  isomorphism of abelian groups $A\oplus A\to A$, so
$$B\oplus B\cong A\oplus\mathbb{Z}\oplus A\oplus\mathbb{Z}\cong A\oplus A
\cong A\not\cong B.$$

However,
$$B\oplus B\oplus B\cong B\oplus A\cong A\oplus\mathbb{Z}\oplus A
\cong A\oplus\mathbb{Z}=B.$$
\end{proof}

Consequently, $A$ and $B$ also answer Kaplansky's first two test
problems: each is a direct summand of the other, and
$A\oplus A\cong B\oplus B$.

\section{Variants}
\label{sec:variants}

Now let $n>1$ and let $A$ be the group of bounded sequences of
elements of $\mathbb{Z}[\sqrt[n] 2]$. 

A straightforward adaptation of the proof of our main theorem shows
that $A\cong A\oplus\mathbb{Z}^n$ but $A\not\cong A\oplus\mathbb{Z}^m$
for $0<m<n$. Eklof and Shelah~\cite{eklof_shelah} also give such an
example.

Also, Theorem~\ref{thm:corner} generalizes to show that
$B=A\oplus\mathbb{Z}$ is isomorphic to the direct sum of $n+1$ copies
of itself, but not to the direct sum of $m+1$ copies if $0<m<n$.

\bibliography{mybib}{} 
\bibliographystyle{amsalpha}

\end{document}